\definecolor{verylight}{gray}{0.97}
\definecolor{light}{gray}{0.9}
\definecolor{medium}{gray}{0.85}
\definecolor{dark}{gray}{0.6}
\def\frk{\frak}               
\def\Phi{{\frk n}}
\def\Phi{{\frk N}}
\def\opn#1#2{\def#1{\operatorname{#2}}} 
\opn\chara{char} \opn\length{\ell} \opn\pd{pd} \opn\rk{rk}
\opn\projdim{proj\,dim} \opn\injdim{inj\,dim} \opn\rank{rank}
\opn\depth{depth} \opn\grade{grade} \opn\height{height}
\opn\embdim{emb\,dim} \opn\codim{codim}
\opn\Tr{Tr} \opn\bigrank{big\,rank}
\opn\superheight{superheight}\opn\lcm{lcm}
\opn\trdeg{tr\,deg}
\opn\reg{reg} \opn\lreg{lreg} \opn\ini{in} \opn\lpd{lpd}
\opn\size{size} \opn\sdepth{sdepth}
\opn\link{link}\opn\fdepth{fdepth}\opn\lex{lex}
\opn\div{div} \opn\Div{Div} \opn\cl{cl} \opn\Cl{Cl}
\opn\Spec{Spec} \opn\Supp{Supp} \opn\supp{supp} \opn\Sing{Sing}
\opn\Ass{Ass} \opn\Min{Min}\opn\Mon{Mon}
\opn\Ann{Ann} \opn\Rad{Rad} \opn\Soc{Soc}
\opn\Im{Im} \opn\Ker{Ker} \opn\Coker{Coker} \opn\Am{Am}
\opn\Hom{Hom} \opn\Tor{Tor} \opn\Ext{Ext} \opn\End{End}
\opn\Aut{Aut} \opn\id{id}
\opn\nat{nat}
\opn\pff{pf}
\opn\Pf{Pf} \opn\GL{GL} \opn\SL{SL} \opn\mod{mod} \opn\ord{ord}
\opn\Gin{Gin} \opn\Hilb{Hilb}\opn\sort{sort}
\opn\aff{aff} \opn\con{conv} \opn\relint{relint} \opn\st{st}
\opn\lk{lk} \opn\cn{cn} \opn\core{core} \opn\vol{vol}
\opn\link{link} \opn\star{star}\opn\lex{lex}\opn\set{set}\opn\rev{rev}
\opn\gr{gr}
\def\pot#1#2{#1[\kern-0.28ex[#2]\kern-0.28ex]}
\opn\dirlim{\underrightarrow{\lim}}
\opn\inivlim{\underleftarrow{\lim}}
\def\Implies{\ifmmode\Longrightarrow \else
        \unskip${}\Longrightarrow{}$\ignorespaces\fi}
\def\implies{\ifmmode\Rightarrow \else
        \unskip${}\Rightarrow{}$\ignorespaces\fi}
\def\iff{\ifmmode\Longleftrightarrow \else
        \unskip${}\Longleftrightarrow{}$\ignorespaces\fi}
\newtheorem{Theorem}{Theorem}[section]
\newtheorem{Corollary}[Theorem]{Corollary}
\newtheorem{Proposition}[Theorem]{Proposition}
\newtheorem{Remarks}[Theorem]{Remarks}
\let\epsilon\varepsilon
\let\kappa=\varkappa
\def\qed{\ifhmode\textqed\fi
      \ifmmode\ifinner\quad\qedsymbol\else\dispqed\fi\fi}
\def\textqed{\unskip\nobreak\penalty50
       \hskip2em\hbox{}\nobreak\hfil\qedsymbol
       \parfillskip=0pt \finalhyphendemerits=0}
\def\dispqed{\rlap{\qquad\qedsymbol}}
\opn\dis{dis}
\def\pnt{{\raise0.5mm\hbox{\large\bf.}}}
\opn\Lex{Lex}
\begin{document}

\title {Ideals generated by $2$--minors of Hankel matrices}

\author {Faryal Chaudhry and Ayesha Asloob Qureshi }

\address{Faryal Chaudhry, Abdus Salam School of Mathematical Sciences, GC University,
Lahore. 68-B, New Muslim Town, Lahore 54600, Pakistan} \email{chaudhryfaryal@gmail.com}

\address{Ayesha Asloob Qureshi, Department of Pure and Applied Mathematics, Graduate School of Information Science and Technology,
Osaka University, Toyonaka, Osaka 560-0043, Japan}
\email{ayesqi@gmail.com}

\begin{abstract}
We study ideals generated by $2$--minors of generic Hankel matrices.
\end{abstract}

\thanks{The first author acknowledges the support from Higher Education Commission, Pakistan. The second author was supported by JSPS Postdoctoral Fellowship Program for Foreign Researchers}
\subjclass{13H10, 13P10, 13C40}
\keywords{Determinantal ideals, Hankel matrices, closed graphs}
\maketitle

\section*{Introduction}

In \cite{CDE}, the authors introduced and studied binomial edge ideals associated with scrolls. More precisely, to a closed graph $G$ on the vertex set $[n]$ with the edge set $E(G)$, one associates the binomial ideal $I_{G} \subset K[x_1,\ldots,x_{n+1}]$ generated by the $2$-minors of the $(2\times n)$ - Hankel matrix
$\begin{pmatrix}
x_{1}  & x_{2}   & \cdots     &  x_{n}      \\
x_{2}  & x_{3}   & \cdots     &  x_{n+1}
\end{pmatrix}$
which correspond to the edges of the graph $G$. In other words, $I_{G}=(\begin{vmatrix}
x_{i}  & x_{j}      \\
x_{i+1}  & x_{j+1}
\end{vmatrix}: i<j, \{i,j\} \in E(G))$.

The definition of scroll binomial edge ideals was inspired by the construction of the classical binomial edge ideals as they were introduced by \cite{HHHKR} and \cite{Oh}  a few years ago.
 Later on, there were considered several ways to generalize classical binomial edge ideals. We refer the reader to \cite{EHHQ,EHHM,R} for further information on these gene\-ralizations. Similar developments may be considered for scroll binomial edge ideals. One direction of generalization  is illustrated in this paper.

Namely, for a generic Hankel matrix $X = (x_{ij})_{\substack{1\leq i \leq m, \\ 1\leq j \leq n}}$ with $x_{ij}= x_{i+j-1}$ for all $i,j,$ and for two closed graphs $G_{1}$ on the vertex set $[m]$ with edge set $E(G_{1})$, and $G_{2}$ on the vertex set $[n]$ with edge set $E(G_{2})$, we consider the Hankel binomial ideal $I_{G_{1},G_{2}} \subset S = K[x_1,\ldots,x_{m+n-1}]$ defined as follows: $$ I_{G_{1},G_{2}}= ( g_{e,f}=\begin{vmatrix}
x_{i+k-1}  & x_{i+l-1}      \\
x_{j+k-1}  & x_{j+l-1}
\end{vmatrix}: e= \{i,j\} \in E(G_{1}), f= \{k,l\} \in E(G_{2})).$$
If $G_{1}$ and $G_{2}$ are complete graphs, that is, $G_{1}=K_{m}$ and $G_{2}=K_{n}$, then $I_{G_{1},G_{2}}$ is generated by all the $2$-minors of $X$. We refer the reader to \cite{C,W} for information about the ideal $I_{K_m,K_n}$.

In this paper, we work with closed graphs. We recall from \cite{HHHKR} that a simple graph $G$ on the vertex set $[n]$ is closed if there exists a labeling of its vertices with the property that  if $1 \leq i < j < k \leq n$ or $1 \leq k < j < i \leq n$, and $\{i,j\}$, $\{i,k\}$ are edges of $G$, then $\{j,k\}$ is an edge of $G$. This is equivalent to saying that if $\{i,j\}\in E(G)$ with $i<j,$ then, for all
$i<k<j,$ $\{i,k\}$ and $\{k,j\}$ are edges in $G.$

In \cite{EHH}, it was shown that a simple graph $G$ on $[n]$ is closed if and only if there exists a labeling of $G$ such that all facets of the clique complex $\Delta(G)$ of $G$ are intervals $[a,b] \subset [n]$.
A clique of $G$ is a complete subgraph of $G$. The cliques of $G$ form a simplicial complex called the clique complex of $G$.

Throughout this paper, if $G$ is a closed graph on $[n]$, we assume that $G$ is labeled such that if its maximal cliques are $F_1,\dots,F_r,$
then $ F_{i}=[a_{i},b_{i}]$ for $1\leq i\leq r$, and  $1=a_1<a_2<\cdots <a_r<b_r=n$; see \cite[Theorem 2.2]{EHH}.
In addition, we write $\Delta(G)=\langle F_1,\dots,F_r\rangle$ if the maximal cliques of $G$ are $F_1,\dots,F_r$.

Let $G_{1}$, $G_{2}$ be connected closed graphs on $[m]$, respectively $[n]$. To $G_{1}$ and $G_{2}$ we associate a graph $G$ on the vertex set $[m+n-2]$ with the edge set: $$E(G)=\{\{i+k-1, j+l-2\} : i<j, k<l,\{i,j\} \in E(G_{1}),~\text{and}~ \{k,l\} \in E(G_{2})\}.$$

In Theorem \ref{1.1}, we show that $I_{G_{1},G_{2}}= I_{G}$, where $I_{G}\subset S$ is the scroll binomial edge ideal of $G$. Moreover, $G$ is a connected closed graph. This is the main result of our paper. It allows us to apply all the known results on scroll binomial edge ideals proved in \cite{CDE}. The first consequence of Theorem~\ref{1.1} is that $I_{G_{1},G_{2}}$ has a quadratic Gr\"obner basis with respect to the revlexicographic order on $S$ induced by $x_1>\cdots >x_{m+n-1}$. Additionally, it follows that $I_{G_{1},G_{2}}$ is a Cohen- Macaulay ideal of dimension $2$.

In Proposition \ref{2.1}, we show that any maximal clique of the graph $G$ is actually obtained by "adding" a maximal clique $[a,b]$ of $G_{1}$ with a maximal clique of $G_{2}$. By using this proposition, in Theorem \ref{2.3} we derive the main properties of $I_{G_{1},G_{2}}$: primality, minimal primes, radical property, linear resolution.

Finally, in Proposition \ref{2.4}, we show that $\reg(S/I_{G_{1},G_{2}}) \leq {m+n-2}$ and the equality holds if and only if $G_{1}$ and $G_{2}$ are line graphs.

We would like to make a final remark. If one of the graphs $G_{1}$, $G_{2}$ is not connected and the other one is connected, then the associated graph $G$ is still connected, thus all the proved results are still valid. If both graphs are disconnected, then one easily sees that $G$ might be disconnected. In that case, we may apply only the results of \cite{CDE} which do not involve the connectedness of the graph G.
We chose to treat only the case when $G_{1}$ and $G_{2}$ are connected since this is the most interesting  setting   and to avoid long technical arguments needed for distinguish between those graphs $G_1$ and $G_2$ which give a connected or disconnected graph $G.$

\section{Gr\"obner basis}

Let $G_{1}$, $G_{2}$ be two connected closed graphs on the vertex $[m]$ and $[n]$, respectively, and $X$ be a generic $(m \times n)$ - Hankel matrix with $m \leq n$. Thus,
$$X=\left(
\begin{array}{cccc}
  x_{1} & x_{2} & \ldots & x_{n} \\
  x_{2} & x_{3} & \ldots & x_{n+1} \\
  \ldots & \ldots & \ldots & \ldots \\
  x_{m} & x_{m+1} & \ldots & x_{m+n-1}
\end{array}\right).
$$
Let $e=\{i,j\} \in E(G_{1})$ with $i<j$ and $f=\{k,l\} \in E(G_{2})$ with $k<l$. To the pair $(e,f)$, we assign the following $2$-minor of $X$: $$g_{e,f} =[i\ j|k\ l]= x_{i+k-1}x_{j+l-1}-x_{j+k-1}x_{i+l-1}.$$
We fix a field $K$ and let $S=K[x_{1},\ldots ,x_{m+n-1}]$ endowed with the reverse lexicographic order induced by $x_{1}>x_{2}> \cdots >x_{m+n-1}$. Then, with respect to this order, $\ini_{\rev}(g_{e,f})$ = $x_{j+k-1}x_{i+l-1}$.

Let $X'=\left(
\begin{array}{cccc}
  x_1 & x_2 & \ldots & x_{m+n-2} \\
  x_{2} & x_{3} & \ldots & x_{m+n-1}
\end{array}\right)
$
be the $2\times (m+n-2)$- Hankel matrix and $G$ be the graph on the vertex set $[m+n-2]$ whose edge set is:
$$E(G) = \{\{i+k-1, j+l-2\}: i<j, k<l, \{i,j\} \in E(G_{1})~ \text{and}~\{k,l\} \in E(G_{2})\}.$$

Let $G_{1}$ and $G_{2}$ be as before. We define the Hankel ideal of the matrix $X$ as
$$I_{G_{1},G_{2}} = (g_{e,f}: e \in E(G_{1}),~ f \in E(G_{2})).$$
In addition, let
$I_{G}=(g_{ij}= \begin{vmatrix}
x_{i}  & x_{j}      \\
x_{i+1}  & x_{j+1}
\end{vmatrix}: i<j, \{i,j\} \in E(G))$
be the scroll binomial edge ideal defined on the matrix $X'.$

With the above notation and settings we may state the main result of this paper.

\begin{Theorem}\label{1.1}
Let $G_1$ and $G_2$ be closed graphs. Then $G$ is a connected closed graph and   $I_{G_1,G_2} = I_G$.
\end{Theorem}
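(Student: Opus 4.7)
The plan is to establish three assertions: $G$ is closed, $G$ is connected, and $I_{G_1,G_2} = I_G$. The combinatorial structure of $G$ handles closedness and connectedness, while a Hankel telescoping identity drives the ideal equality. For closedness, given $\{u,v\} \in E(G)$ with $u < v$ coming from $\{i,j\} \in E(G_1)$ and $\{k,l\} \in E(G_2)$ as $\{u,v\} = \{i+k-1,\, j+l-2\}$, and any $u < w < v$, I would set $w - u = t$ and look for a ``$G_1$-shift'' $s$ with $\{i,\, i+s\} \in E(G_1)$ and $\{k,\, k+t+1-s\} \in E(G_2)$; since $G_1,G_2$ are closed, this amounts to finding an integer $s \in [\max(1,\, t-(l-k)+1),\ \min(t,\, j-i)]$, which is nonempty by a direct interval check, and $\{w,v\}$ is handled symmetrically. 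For connectedness, each pair of maximal cliques $F_\alpha = [a_\alpha, b_\alpha]$ of $G_1$ and $F'_\beta = [c_\beta, d_\beta]$ of $G_2$ produces a clique of $G$ on the interval $[a_\alpha + c_\beta - 1,\, b_\alpha + d_\beta - 2]$; these intervals cover $[m+n-2]$, and consecutive ones overlap because consecutive maximal cliques of $G_1$ (resp.\ $G_2$) share a vertex.

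For the ideal equality, the main identity is
\[
x_a x_d - x_b x_c \;=\; \sum_{p = a}^{b - 1} g_{p,\, a + d - p - 1},
\]
valid for $a < b \le c < d$ with $a + d = b + c$, where $g_{p,q} = x_p x_{q+1} - x_{p+1} x_q$ is the scroll $2$-minor of $X'$. Applied with $(a,b,c,d) = (i+k-1,\, j+k-1,\, i+l-1,\, j+l-1)$, assuming WLOG $j - i \le l - k$ (otherwise swap the roles of $b$ and $c$), this writes $g_{e,f}$ as a sum of scroll minors. To see each pair $\{p,\, i+j+k+l-3-p\}$ is an edge of $G$, I would exhibit $i \le i' < j' \le j$ and $k \le k' < l' \le l$ with $i' + k' - 1 = p$ and $j' + l' - 2 = i+j+k+l-3-p$; writing $i' = i + \alpha$ and $j' = j - \beta$ with $\alpha, \beta \in [0,\, p-a]$ and invoking closedness of $G_1, G_2$ for automatic edge membership, feasibility of $(\alpha, \beta)$ reduces to an elementary interval arithmetic check that uses $l - k \ge j - i$ in an essential way. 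This proves $I_{G_1,G_2} \subseteq I_G$.

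For the reverse inclusion, note that the $p = a$ summand of the telescoping is exactly $g_{s,t}$ where $\{s,t\} = \{i+k-1,\, j+l-2\}$, so rearranging gives
\[
g_{s,t} \;=\; g_{e,f} \;-\; \sum_{p = a + 1}^{b - 1} g_{p,\, a + d - p - 1},
\]
in which each remaining summand has scroll-width $(a+d-p-1) - p = (t-s) - 2(p - a) \le t - s - 2$. Induction on $t - s$, with base case $t - s = 1$ (where $(j-i) + (l-k) = 2$ forces $j = i+1$, $l = k+1$, and $g_{s, s+1} = g_{e,f}$ outright), yields $g_{s,t} \in I_{G_1,G_2}$. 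I expect the main obstacle to be the combinatorial verification in paragraph two that each telescoped summand lands on a genuine edge of $G$: it is here that closedness of both $G_1$ and $G_2$ enters essentially, and balancing the shifts $\alpha$ and $\beta$ against the interval constraints takes the most care.
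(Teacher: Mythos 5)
Your proposal is correct and follows essentially the same route as the paper: the heart of both arguments is the telescoping identity writing the Hankel minor $g_{e,f}=x_ax_d-x_bx_c$ as a consecutive sum of scroll minors $x_px_{q+1}-x_{p+1}x_q$ along edges of $G$, together with interval arithmetic and closedness of $G_1,G_2$ to certify that each summand sits on a genuine edge. The only organizational difference is in the inclusion $I_G\subseteq I_{G_1,G_2}$: the paper uses the direct two-term identity $h_{e_f}=g_{e,f}-g_{e',f'}$ with $e'=\{i+1,j\}$, $f'=\{k,l-1\}$, whereas you rearrange the full telescoping sum and induct on $t-s$ — both work, the paper's being slightly more economical.
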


\begin{proof}

Let $\{p,q\}, \{p,r\} \in E(G)$ and $q<r$. Then $p=i+k-1$, $q=j+l-2$ and $r=u+v-2$ for some $\{i,j\}, \{i,u\} \in E(G_1)$ and $\{k,l\}, \{k,v\} \in E(G_2)$. We may assume that $j < u$ and $l<v$. Then the $\{j-1,u\} \in E(G_1)$ and $\{l,v\} \in E(G_2)$ because $G_1$ and $G_2$ are closed. This gives $\{j+l-2, v+u-2\} = \{q,r\} \in E(G)$. 

Similarly, if $\{p,q\}, \{r,q\} \in E(G)$ with $p<r<q$, then by similar arguments, it follows that $\{p,r\} \in E(G)$. Therefore, $G$ is a closed graph. For connectedness, it is enough to observe that,
for any $i\leq m-1$ and $k\leq n-1,$ $\{i,i+1\}\in E(G_1)$ and $\{k,k+1\}\in E(G_2),$ thus $\{i+k-1, i+k\}\in E(G).$

Next, we prove the equality $I_{G_1,G_2} = I_G$.
Let $e=\{i,j\} \in E(G_1)$ and $f=\{k,l\} \in E(G_2)$ and $e_f= \{i+k-1,j+l-2\} \in E(G)$. Then $h_{e_f}= x_{i+k-1} x_{j+l-1} - x_{i+k} x_{j+l-2}$ and $g_{e,f}= x_{i+k-1}x_{j+l-1} - x_{i+l-1} x_{j+k-1}$ are typical generators of $I_G$ and $I_{G_!,G_2}$, respectively. First, we  show that $I_G \subset I_{G_1,G_2}$. If $j=i+1$ or $l=k+1$ we get $h_{e_f}=g_{e,f},$, thus $h_{e_f}\in I_{G_1,G_2}.$ Now we consider
$j>i+1$ and $l>k+1.$
By using the fact that $G_1$ and $G_2$ are closed graphs, we see that $\{i,p\}, \{p,j\} \in E(G_1)$, and $\{k,q\}, \{q,l\} \in E(G_2)$ for all $i<p<j$ and $k<q<l$.  In particular, $e'=\{i+1,j\} \in E(G_1)$ and $f'=\{k,l-1\} \in E(G_2)$. Then $g_{e',f'} = x_{i+k}x_{j+l-2} - x_{i+l-1} x_{j+k-1} \in I_{G_1,G_2}$ and $h_{e_f}= g_{e,f}  - g_{e',f'} \in I_{G_1,G_2}$. Therefore, $I_G \subset I_{G_1,G_2}$.

Now, we show that $I_{G_1,G_2} \subset  I_G $.  Let $l-k > j-i=t $. Again, by using the fact the $G_1$ and $G_2$ are closed, we see that $e_1 =\{i+1, j\}, e_2 = \{i+2,j\}, \ldots, e_t= \{i+t-1,j\} \in E(G_1)$
and $f_1=\{k,l-1\}, f_2 = \{k,l-2\}, \ldots, f_t = \{k,l-t+1\} \in E(G_2)$. Then $g_{e,f} = h_{e_f} + h_{{e_1}_{f_1}}+  h_{{e_2}_{f_2}} + \cdots +  h_{{e_t}_{f_t}}$, which gives $g_{e,f} \in I_{G}$. Similarly, one can show $g_{e,f} \in I_{G}$ when $j-i > l-k$. This completes the proof.
\end{proof}

By applying \cite[Theorem 1.1]{CDE} and \cite[Corollary 1.3]{CDE}  we get the following consequence of the above theorem.

\begin{Corollary} Let $G_1,G_2$ be two connected closed graphs on the vertex sets $[m],$ respectively $[n].$ Then $I_{G_1,G_2}$ has a quadratic Gr\"obner basis with respect to the revlexicographic order induced by $x_1>\cdots >x_{m+n-1}$. Moreover, $I_{G_1,G_2}$ is a
Cohen-Macaulay ideal of dimension $2.$
\end{Corollary}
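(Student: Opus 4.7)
The plan is essentially to reduce everything to known results about scroll binomial edge ideals via Theorem~\ref{1.1}, so very little new work is required. First I would invoke Theorem~\ref{1.1} to replace $I_{G_1,G_2}\subset S=K[x_1,\dots,x_{m+n-1}]$ by the scroll binomial edge ideal $I_G$, where $G$ is the graph on $[m+n-2]$ defined in the previous section. Theorem~\ref{1.1} simultaneously gives us two facts we need: the equality of ideals $I_{G_1,G_2}=I_G$, and the property that $G$ is a connected closed graph. Both statements of the corollary are properties of the ideal, so replacing $I_{G_1,G_2}$ by $I_G$ is harmless.

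Next I would apply \cite[Theorem 1.1]{CDE}, which asserts that for any closed graph $H$ on $[N-1]$, the scroll binomial edge ideal $I_H\subset K[x_1,\dots,x_N]$ has a quadratic Gröbner basis with respect to the reverse lexicographic order induced by $x_1>\cdots>x_N$. Setting $H=G$ and $N=m+n-1$ gives at once the claimed quadratic Gröbner basis for $I_G=I_{G_1,G_2}$ in our polynomial ring $S$, with respect to the revlex order $x_1>\cdots>x_{m+n-1}$. The orderings and ambient rings match by construction, so no translation is needed.

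For the second assertion I would apply \cite[Corollary 1.3]{CDE}, which states that for a connected closed graph $H$ the scroll binomial edge ideal $I_H$ is Cohen--Macaulay of dimension $2$. Since Theorem~\ref{1.1} guarantees that $G$ is connected and closed, the hypotheses are met, and we conclude that $S/I_{G_1,G_2}=S/I_G$ is Cohen--Macaulay of Krull dimension $2$.

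Because the proof is a two-line citation, there is no substantive obstacle; the only thing to be careful about is making sure that the ambient ring of $I_G$ (the polynomial ring in $m+n-1$ variables associated to the $2\times(m+n-2)$ Hankel matrix $X'$) is literally the same $S$ in which $I_{G_1,G_2}$ lives, and that the revlex order is the same on both sides. This is ensured by the setup at the start of Section~1 and by the identification of the two ideals in Theorem~\ref{1.1}. I would simply write one short paragraph citing Theorem~\ref{1.1}, \cite[Theorem 1.1]{CDE} and \cite[Corollary 1.3]{CDE} in succession.
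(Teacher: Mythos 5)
Your proposal is correct and follows exactly the paper's own argument: the corollary is obtained by combining Theorem~\ref{1.1} with \cite[Theorem 1.1]{CDE} and \cite[Corollary 1.3]{CDE}. The extra care you take to note that the ambient ring and the revlex order match on both sides is a reasonable, if routine, addition.
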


\section{Properties of Hankel ideals}

\begin{Proposition}\label{2.1}
Let $G_{1}$, $G_{2}$ be connected closed graphs on the vertex set $[m]$, respectively, $[n]$ and let $G$ be the graph associated to the pair $(G_{1},G_{2})$. Then every maximal clique of $G$ is of the form $[a+c-1, b+d-2]$ where $[a,b]$ is a maximal clique of $G_{1}$ and $[c,d]$ is a maximal clique of $G_{2}.$
\end{Proposition}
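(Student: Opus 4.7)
The plan is to exploit two facts established in Theorem~\ref{1.1}: the graph $G$ is closed, so its maximal cliques are intervals in $[m+n-2]$, and, because $G$ is closed, any interval $[A,B]$ whose extremal pair $\{A,B\}$ is an edge of $G$ is automatically a clique (closedness propagates the edge $\{A,B\}$ to every sub-pair in $[A,B]$).

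First I would verify the easy inclusion: every interval of the form $[a+c-1,\,b+d-2]$, with $[a,b]$ a maximal clique of $G_1$ and $[c,d]$ a maximal clique of $G_2$, is itself a clique of $G$. Indeed $\{a,b\}\in E(G_1)$ and $\{c,d\}\in E(G_2)$, so by the very definition of $E(G)$ one has $\{a+c-1,\,b+d-2\}\in E(G)$, and closedness then upgrades this single edge to a clique on the entire interval.

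The substantive step is the converse. Let $[A,B]$ be a maximal clique of $G$; connectedness of $G$ together with $m,n\geq 2$ guarantees $A<B$, so the extremal edge $\{A,B\}$ lies in $E(G)$. By the defining description of $E(G)$, there exist $\{i,j\}\in E(G_1)$ and $\{k,l\}\in E(G_2)$ with $i<j$, $k<l$, $A=i+k-1$ and $B=j+l-2$. Now enclose these edges in maximal cliques of the respective graphs: pick $[a,b]\supseteq\{i,j\}$ and $[c,d]\supseteq\{k,l\}$, so that $a\leq i<j\leq b$ and $c\leq k<l\leq d$. This yields $a+c-1\leq A$ and $B\leq b+d-2$, hence $[A,B]\subseteq[a+c-1,\,b+d-2]$. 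The right-hand side is a clique of $G$ by the previous paragraph, so maximality of $[A,B]$ forces the two intervals to coincide, giving $A=a+c-1$ and $B=b+d-2$.

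I do not foresee a serious obstacle: the only bookkeeping is checking that a maximal clique of $G$ is genuinely nondegenerate so that the extremal edge $\{A,B\}$ is available, and this is immediate from the connectedness proved in Theorem~\ref{1.1}. With the interval description of maximal cliques in hand, the whole argument reduces to a two-line comparison of intervals.
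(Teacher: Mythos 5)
Your proof is correct, and its skeleton is the same as the paper's: a maximal clique of the closed graph $G$ is an interval $[A,B]$, and the edge $\{A,B\}$ decomposes by the definition of $E(G)$ as $A=i+k-1$, $B=j+l-2$ for some $\{i,j\}\in E(G_1)$, $\{k,l\}\in E(G_2)$. The two arguments part ways only at the finish. The paper shows by contradiction that $[i,j]$ and $[k,l]$ are \emph{themselves} maximal cliques: any extension of, say, $[i,j]$ by a vertex $u<i$ or $v>j$ would, via the definition of $E(G)$, produce an edge of $G$ leaving $[A,B]$, contradicting its maximality. You instead enclose $\{i,j\}$ and $\{k,l\}$ in maximal cliques $[a,b]$ and $[c,d]$, observe that $[a+c-1,b+d-2]$ is a clique of $G$ containing $[A,B]$, and let maximality of $[A,B]$ force equality of the two intervals. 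Your auxiliary observation that the ``sum'' of two maximal cliques is always a clique of $G$ is exactly part (1) of the remark the paper records right after the proposition, so your route effectively promotes that remark to a lemma and thereby avoids the paper's case analysis; the paper's route gives the slightly stronger conclusion that the specific intervals $[i,j]$, $[k,l]$ read off from the extremal edge are already maximal. Both are complete; your bookkeeping (that $A<B$ so the extremal edge exists, and that closedness upgrades a single edge $\{A,B\}$ to a clique on all of $[A,B]$) is sound and matches what the paper uses implicitly.
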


\begin{proof}
Let $[p,q]$ be a maximal clique of $G$. Then $p={i+k-1}$, $q={j+l-2}$ for some $\{i,j\} \in E(G_{1})$ and $\{k,l\} \in E(G_{2})$. We claim that $[i,j]$ is a maximal clique of $G_{1}$ and $[k,l]$ is a maximal clique of $G_{2}$. We need to prove only the first part of the claim since the second part can be proved in a similar way.

Since $\{i,j\} \in E(G_{1})$  and $G_{1}$ is closed, it follows that $[i,j]$ is a clique of $G_{1}$. Let us assume that $[i,j]$ is not a maximal clique. Then there exists $u \in V(G_{1}), u<i,$ such that $\{u,j\} \in E(G_{1})$ or there exists $v \in V(G_{1}),v>j,$ such that $\{i,v\} \in E(G_{1})$. In the first case, we get $\{u+k-1, j+l-2\} \in E(G)$, which is impossible since ${u+k-1} < {i+k-1}$ and $[i+k-1, j+l-2]$ is a maximal clique of the closed graph $G$. Similarly, if $\{i,v\} \in E(G_{1})$ for some $v>j$, we get $\{i+k-1,v+l-2\} \in E(G)$, again a contradiction by the same argument as above.
\end{proof}

\begin{Remarks}{\em \begin{itemize}
  \item [(1)]  It is clear that if $[a,b]$ is a maximal clique of $G_{1}$ and $[c,d]$ is a maximal clique of $G_{2}$, then $[a+c-1, b+d-2]$ is a clique of $G$. But it might happen that $[a+c-1, b+d-2]$ is not a maximal one.
 For example, let $G_{1}$, $G_{2}$ be closed graphs on the vertex set $[5]$ with the maximal cliques $F_{11}=[1,3],\ F_{12}=[2,4],\ F_{13}=[3,5]$ and $F_{21}=[1,3],\ F_{22}=[2,5],$ respectively. One can easily see that  the maximal cliques $ F_{13}=[3,5]$ and $F_{21}=[1,3]$ give  the clique $[3,6]$ in the associated graph $G$ which is not maximal. Actually, the maximal cliques of $G$ are
$[1,3],[2,6],[3,7],$ and $[4,8]$.

  \item [(2)] The cliques $[a,b]$ and $[c,d]$ in the above proposition are not necessarily  uniquely determined by the maximal clique of $G$.
For example, let $G_{1}$, $G_{2}$ be line graphs on the vertex set $[3]$. The associated graph $G$ is again a line graph on the vertex set $[4]$. Then, the maximal clique $[2,3]$ in $G$ can be obtained either by "adding" the clique $[1,2]$ of $G_1$ with $[2,3]$ of $G_2$ or by using $[2,3]$ from $G_1$ and $[1,2]$ from $G_2.$
\end{itemize}
}
\end{Remarks}

The following theorem collects the main properties of the ideal $I_{G_{1},G_{2}}$. In the statement we use the well-known notation
$\Ass(I)$ and $\Min(I)$ for the associated prime ideals and, respectively, minimal prime ideals of $I.$

\begin{Theorem}\label{2.3}
Let $G_{1}, G_{2}$ be connected closed graphs on the vertex sets $[m]$, respectively [n]. Then:
\begin{itemize}
  \item [(1)] $I_{G_{1},G_{2}}$ is a prime ideal if and only if $G_{1}$ and $G_{2}$ are complete graphs.
  \item [(2)] If at least one of the graphs $G_{1}$, $G_{2}$ is not complete, then $$ \Ass(I_{G_{1},G_{2}})= \Min(I_{G_{1},G_{2}}) = \{ I_{{K_{m},K_{n}}},(x_2,\ldots,x_{m+n-2})\}.$$
  \item [(3)] $I_{G_{1},G_{2}}$ is a set-theoretical complete intersection.
  \item [(4)] $I_{G_{1},G_{2}}$ is a radical ideal if and only one of the following holds:
	\begin{itemize}
		\item [(a)] $G_{1}=K_m$ and either $G_2=K_n$ or
	$\Delta(G_2)=\langle [1,n-1],[2,n]\rangle$;
	\item [(b)] $G_{2}=K_n$ and either $G_1=K_m$ or
	$\Delta(G_1)=\langle [1,m-1],[2,m]\rangle$;
	\end{itemize}
  \item [(5)] The following statements are equivalent:
  \begin{itemize}
    \item  [(a)] $I_{G_{1},G_{2}}$  has a linear resolution;
    \item  [(b)] All powers of $I_{G_{1},G_{2}}$ have a linear resolution;
    \item  [(c)] $I_{G_{1}}$ and $I_{G_{2}}$ have a linear resolution;
    \item  [(d)] $G_{1}$ and $G_{2}$ are complete graphs.
  \end{itemize}
\end{itemize}
\end{Theorem}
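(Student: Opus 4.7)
The unifying strategy is to transport everything to the scroll binomial edge ideal $I_G$ of the associated graph $G$ via Theorem~\ref{1.1}, then quote the corresponding statements for scroll binomial edge ideals from \cite{CDE} and finally translate conditions on $G$ back into conditions on the pair $(G_1,G_2)$ using Proposition~\ref{2.1}. Thus the engine of the proof is the dictionary
\[
I_{G_1,G_2}=I_G,\qquad \{\text{max.\ cliques of }G\}\longleftrightarrow\{[a+c-1,b+d-2]:[a,b]\in\Delta(G_1),\,[c,d]\in\Delta(G_2)\}.
\]

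For part (1), a scroll binomial edge ideal $I_G$ on $[m+n-2]$ is prime exactly when $G$ is the complete graph (this is the determinantal ideal of all $2$-minors of $X'$, cf.\ \cite{CDE}). By Proposition~\ref{2.1}, $G=K_{m+n-2}$ amounts to $[1,m+n-2]\in\Delta(G)$, which from the clique formula forces $[1,m]\in\Delta(G_1)$ and $[1,n]\in\Delta(G_2)$, that is $G_1=K_m$ and $G_2=K_n$. Part (3) is immediate from Theorem~\ref{1.1} together with the fact proved in \cite{CDE} that every scroll binomial edge ideal of a connected closed graph is a set-theoretic complete intersection. Part (2) is obtained in the same manner: \cite{CDE} describes $\Ass(I_G)=\Min(I_G)$ for a non-complete connected closed graph on $[N]$ as $\{I_{K_N},(x_2,\dots,x_{N-1})\}$, and substituting $N=m+n-2$ and using $I_{K_{m+n-2}}=I_{K_m,K_n}$ (via Theorem~\ref{1.1}) yields the stated form. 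Part (5) follows analogously from the characterization in \cite{CDE} of when $I_G$, together with all its powers, has a linear resolution (namely, iff $G$ is complete), combined with part (1); the $I_{G_i}$ statement comes from the classical result that $I_H$ has a linear resolution iff $H$ is complete.

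The core work is in part (4). From part (2) the ideal is radical iff either $G_1=K_m$, $G_2=K_n$, or
\[
I_{G_1,G_2}=I_{K_m,K_n}\cap (x_2,\dots,x_{m+n-2}).
\]
Translating through Theorem~\ref{1.1}, the latter is precisely the condition, proved in \cite{CDE}, that $\Delta(G)=\langle [1,m+n-3],\,[2,m+n-2]\rangle$, i.e.\ $G$ has exactly two maximal cliques of the stated form. So I need to show that this shape of $\Delta(G)$ is equivalent to either (a) or (b). The ``if'' direction is a direct computation: e.g.\ under (a), the only max-clique pairs are $([1,m],[1,n-1])$ and $([1,m],[2,n])$, producing precisely $[1,m+n-3]$ and $[2,m+n-2]$.

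The harder ``only if'' direction is where I expect the main technical obstacle. Assume $\Delta(G)=\langle[1,m+n-3],[2,m+n-2]\rangle$. By Proposition~\ref{2.1} the leftmost max clique $[1,m+n-3]$ comes from some $[1,b]\in\Delta(G_1)$ and $[1,d]\in\Delta(G_2)$ with $b+d=m+n-1$, forcing $(b,d)\in\{(m,n-1),(m-1,n)\}$. Consider the first case: then $G_1=K_m$ and $[1,n-1]\in\Delta(G_2)$. The rightmost max clique $[2,m+n-2]$ must then come from $([1,m],[c,n])$ with $c=2$, so $[2,n]\in\Delta(G_2)$. The crucial step is ruling out any further max clique $F=[a_F,b_F]$ of $G_2$: pairing it with $[1,m]\in\Delta(G_1)$ produces the clique $[a_F,m+b_F-2]$ of $G$, which must lie inside $[1,m+n-3]$ or $[2,m+n-2]$, and a short case analysis using maximality of $F$ and of the two intervals forces $F\in\{[1,n-1],[2,n]\}$. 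This yields condition (a); the other case $(b,d)=(m-1,n)$ yields condition (b) by a symmetric argument. Putting the two directions together gives (4) and completes the theorem.
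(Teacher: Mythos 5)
Your proposal is correct and follows essentially the same route as the paper: reduce everything to the scroll binomial edge ideal $I_G$ via Theorem~\ref{1.1}, invoke the corresponding results of \cite{CDE} for closed graphs, and translate back through the clique description of Proposition~\ref{2.1}. The only difference is cosmetic: in part (4) you spell out the elimination of extra maximal cliques of $G_2$ a bit more explicitly than the paper, which simply asserts that the form of the cliques of $G$ forces $\Delta(G_1)=\langle[1,m-1],[2,m]\rangle$ (up to swapping the roles of $G_1$ and $G_2$).
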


\begin{proof}
By Theorem \ref{1.1},  we know that $I_{G_1,G_2} = I_G$ where $G$ is the associated graph of the pair $G_{1}$, $G_{2}$. Hence, in all the statements, we may replace $I_{G_1,G_2}$ by $I_G$.

(1) If $G_{1}=K_{m}$ and $G_{2}=K_{n}$, then $G=K_{m+n-2}$, and the claim is known. Conversely, let $I_{G}$ be a prime ideal. Then, by \cite[Theorem 2.2]{CDE}, it follows that $G$ is a complete graph. Hence $G$ is the clique $[1,m+n-2]$. By Proposition \ref{2.1}, it follows that there exist $[a,b]$ maximal clique in $G_{1}$ and $[c,d]$ maximal clique in $G_{2}$ such that $[a+c-1,b+d-2]=[1,m+n-2]$. This equality implies that $G_{1}=K_{m}$ and $G_{2}=K_{n}$.

(2) follows by (1) and \cite[Theorem 2.2]{CDE}.

(3) This is direct consequence of \cite[Corollary 2.4]{CDE}.

(4) Let us assume that  $G_{2}=K_{n}$ and the facets of the clique complex of $G_{1}$ are $[1,m-1]$ and $[2,m]$. Then one easily sees that the facets of the clique complex of $G$ are $[1,m+n-3]$ and $[2,m+n-2]$. Hence, by using  \cite[Proposition 2.3]{CDE}, it follows that $I_{G}$ is a radical ideal. Let now $I_{G}$ be a radical ideal which is not prime. By \cite[Proposition 2.3]{CDE} it follows that $G$ has two maximal cliques, namely $[1,m+n-3]$ and $[2,m+n-2]$. Let $[a,b]$ and $[c,d]$ be maximal cliques in $G_{1}$, respectively $G_{2}$, such that $[a+c-1,b+d-2]= [1,m+n-3]$. This equality implies that $[a,b]=[1,m-1]$ and $[c,d]=[1,n]$ or $[a,b]=[1,m]$ and $[c,d]=[1,n-1]$. Hence, $G_{1}$ or $G_{2}$ is a complete graph. Let us choose, for instance, $G_{2}=K_{n}$, and assume that $G_{1} \neq K_{m}$. By the form of the cliques of $G$, it follows that $G_{1}$ has the maximal cliques $[1,m-1]$ and $[2,m]$.

The equivalence of the statements in (6) follows by applying \cite[Proposition 2.6]{CDE} and statement (1) in this theorem.
\end{proof}

 In \cite[Theorem 2.7]{CDE} it was shown that, for any closed graph $H$ on the vertex set $[n]$, the regularity of $K[x_1,\ldots,x_{n+1}]/I_{H}$ is at most the number of maximal cliques of $H$. Therefore, we get $\reg(K[x_1,\ldots,x_{n+1}]/I_{H})\leq {n-1}$. If equality holds in this inequality, it follows that $H$ must be the line graph on $[n]$. Conversely, if $H$ is the line graph, then $I_{H}$ is a complete intersection, hence the Koszul complex gives the minimal graded free resolution of $K[x_1,\ldots,x_{n+1}]/I_{H}$. This implies that
$\reg(K[x_1,\ldots,x_{n+1}]/I_{H})= n-1$.

 In our context we get the following result.
\begin{Proposition}\label{2.4}
Let $G_{1}$, $G_{2}$ be connected closed graphs on the vertex set $[m]$, respectively, $[n]$. Then $\reg(S/I_{G_{1},G_{2}}) \leq {m+n-2}$ and the equality holds if and only if $G_{1}$ and $G_{2}$ are line graphs.
\end{Proposition}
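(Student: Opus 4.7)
The plan is to reduce everything to the scroll binomial edge ideal $I_G$ via Theorem~\ref{1.1} and then to invoke the regularity bound for such ideals recalled in the paragraph immediately preceding the proposition. Since $G$ is a connected closed graph on the vertex set $[m+n-2]$, that bound gives the desired upper estimate for $\reg(S/I_G)=\reg(S/I_{G_1,G_2})$, and the same quoted result tells us that equality in the sharp CDE bound holds if and only if $G$ is the line graph on $[m+n-2]$. Thus the whole proposition will reduce to the purely combinatorial equivalence: $G$ is a line graph if and only if both $G_1$ and $G_2$ are line graphs.

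The ``if'' direction is an immediate substitution. When $G_1$ and $G_2$ are line graphs, the formula for $E(G)$ only involves edges $\{i,i+1\}\in E(G_1)$ and $\{k,k+1\}\in E(G_2)$, so the resulting edges of $G$ are exactly the consecutive pairs $\{i+k-1,i+k\}$ with $1\leq i\leq m-1$ and $1\leq k\leq n-1$. As $i+k-1$ ranges over $\{1,\ldots,m+n-3\}$, we recover precisely the line graph on $[m+n-2]$.

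For the converse I would argue by contrapositive directly from the edge formula. Suppose $G_1$ is not a line graph; closedness then gives an edge $\{a,b\}\in E(G_1)$ with $b-a\geq 2$. Since $G_2$ is connected on at least two vertices, pick any edge $\{c,d\}\in E(G_2)$ with $c<d$; the pair $(\{a,b\},\{c,d\})$ produces an edge $\{a+c-1,b+d-2\}\in E(G)$ whose endpoints differ by $(b-a)+(d-c)-1\geq 2$, contradicting the assumption that $G$ is a line graph. By symmetry the same argument forces $G_2$ to be a line graph as well. A variant phrased in the language of maximal cliques via Proposition~\ref{2.1} is equally short: a maximal clique $[a,b]$ of $G_1$ with $b-a\geq 2$, combined with any maximal clique $[c,d]$ of $G_2$, yields a clique $[a+c-1,b+d-2]$ of $G$ of size $(b-a)+(d-c)\geq 3$, again incompatible with $G$ being a line graph.

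There is no real obstacle in this proof: once Theorem~\ref{1.1} and the CDE regularity theorem are in hand, all that remains is the short combinatorial equivalence above, whose hardest aspect is simply keeping track of the indices in the edge formula defining $E(G)$.
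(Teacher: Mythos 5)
Your proposal is correct and follows essentially the same route as the paper: reduce to $I_G$ via Theorem~\ref{1.1}, apply the CDE regularity bound and its equality case ($G$ a line graph), and then check that $G$ is a line graph exactly when $G_1$ and $G_2$ are, with the converse argued by exhibiting a non-consecutive edge (equivalently, a clique of size $\geq 3$) of $G$ coming from a maximal clique $[a,b]$ of $G_1$ with $b>a+1$ --- which is precisely the paper's argument via Proposition~\ref{2.1}. Your treatment of the ``if'' direction is merely a more explicit version of the paper's ``one may easily check.''
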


\begin{proof}
The inequality follows by Theorem~\ref{1.1}. If $G_{1}$ and $G_{2}$ are line graphs, one may easily check that the associated graph $G$ is a line graph too, hence $\reg(S/I_{G})= m+n-2$. Let us now assume that $\reg(S/I_{G})= m+n-2$. Thus, $I_{G}$ is the line graph on $[m+n-2]$, hence its maximal cliques are $[i,i+1]$ for $1 \leq i \leq m+n-3$. Let us assume, for example, that $G_{1}$ is not a line graph. Therefore, $G_{1}$ has at least one maximal clique $[a,b]$ with $b > a+1$. Then, for any maximal clique $[c,d]$ of $G_{2}$, $[a+c-1,b+d-2]$ is a clique of $G$. But $b+d-2 > (a+c-1)+1$, hence $G$ cannot be a line graph.
\end{proof}
{}

\end{document}